\documentclass[
%%% one of
%submission
final
%proceedings
%%% if you compile a final version for the old OJS platform
% , ojs
%%% if all authors have the same affiliation
% , nomarks
]{dmtcs-episciences}

% DON'T LOAD ANY STYLES THAT CHANGE THE PAGE LAYOUT
% AND DON'T CHANGE THE PAGE LAYOUT BY HAND, EITHER.

\usepackage[utf8]{inputenc}
\usepackage{subfigure}

% graphicx is now loaded automatically no need to put this in here anymore.
%
%\usepackage{graphicx}

% We strongly recommend to use natbib. Your colleagues deserve to be
% named in your text. PLEASE, ADAPT YOUR TEXT ACCORDINGLY, such that
% citations are grammatically correct.
\usepackage[round]{natbib}

\usepackage{amsmath}
\usepackage{amssymb}

\newcommand{\ex}{\mathrm{ex}}
\newcommand{\eps}{\varepsilon}

\newtheorem{theorem}{Theorem}[section]
\newtheorem{lemma}[theorem]{Lemma}
\newtheorem{proposition}[theorem]{Proposition}
\newtheorem{claim}[theorem]{Claim}
\newtheorem{corollary}[theorem]{Corollary}

\author{Andrzej Dudek\affiliationmark{1}\thanks{Supported in part by Simons Foundation Grant \#522400.}
  \and Andrzej Ruci\'nski\affiliationmark{2}\thanks{Supported in part by the Polish NSC grant 2014/15/B/ST1/01688}}
\title[Monochromatic loose paths in multicolored $k$-uniform cliques]{Monochromatic loose paths in multicolored $k$-uniform cliques}
% put your affiliation here, not your full address.
% If you like to give away your email or other parts of your address,
% THIS IS NOT THE RIGHT PLACE, your address will change, this paper
% will not.
% Just watch that your personal data that you want to communicate on
% the episcience server is always up to date.
\affiliation{
  % one line per affiliation, no postal codes, grant numbers or similar
  Western Michigan University, USA\\
  Adam Mickiewicz University, Poland}
\keywords{multicolor Ramsey number, loose path, constructive bounds}
% don't try to cheat here, we will check the dates!
\received{2018-3-15}

%\revised{2019-7-13} no known revision

\accepted{2019-7-16}

\begin{document}
\publicationdetails{21}{2019}{4}{7}{4372}
\maketitle
\begin{abstract}
  For integers $k\ge 2$ and $\ell\ge 0$,  a $k$-uniform hypergraph is called  a \emph{loose path of length~$\ell$}, and denoted by  $P_\ell^{(k)}$, if it consists of $\ell $ edges $e_1,\dots,e_\ell$ such that $|e_i\cap e_j|=1$ if $|i-j|=1$ and  $e_i\cap e_j=\emptyset$ if $|i-j|\ge2$. In other words,  each pair of consecutive edges intersects on a single vertex, while all other pairs are disjoint. Let $R(P_\ell^{(k)};r)$ be the minimum integer $n$ such that every $r$-edge-coloring of the complete $k$-uniform hypergraph $K_n^{(k)}$ yields a monochromatic copy of $P_\ell^{(k)}$.
In this paper we are mostly interested in \emph{constructive} upper bounds on $R(P_\ell^{(k)};r)$, meaning that on the cost of possibly enlarging the order of the complete hypergraph, we would like to efficiently find a monochromatic copy of $P_\ell^{(k)}$ in every coloring. In particular, we show that there is a constant $c>0$ such that for all $k\ge 2$, $\ell\ge3$, $2\le r\le k-1$, and $n\ge k(\ell+1)r(1+\ln(r))$, there is an algorithm such that for every $r$-edge-coloring of the edges of $K_n^{(k)}$, it finds a monochromatic copy of $P_\ell^{(k)}$ in time at most $cn^k$. We also prove a  non-constructive upper bound $R(P_\ell^{(k)};r)\le(k-1)\ell r$.
\end{abstract}

\section{Introduction}

For positive integers $k\ge2$ and $\ell\ge0$,  a $k$-uniform hypergraph is called  a \emph{loose path of length~$\ell$}, and denoted by  $P_\ell^{(k)}$, if its vertex set is $\{v_1, v_2, \ldots, v_{(k-1)\ell+1}\}$ and the edge set is $\{e_i = \{ v_{(i-1)(k-1)+q} : 1 \le q \le k \},\  i=1,\dots,\ell\}$, that is, for $\ell\ge2$, each pair of consecutive edges intersects on a single vertex (see Figure~\ref{fig:loose}), while for $\ell=0$ and $\ell=1$ it is, respectively, a single vertex and an edge. For $k=2$ the loose path $P_{\ell}^{(2)}$ is just a (graph) path on $\ell+1$ vertices.

Let ${H}$ be a $k$-uniform hypergraph and $r\ge2$ be an integer. The \emph{multicolor Ramsey number} $R(H;r)$ is the minimum $n$ such that every $r$-edge-coloring of the complete $k$-uniform hypergraph $K_n^{(k)}$ yields a monochromatic copy of ${H}$.

\begin{figure}[t]
\centering
\includegraphics[scale = 0.7]{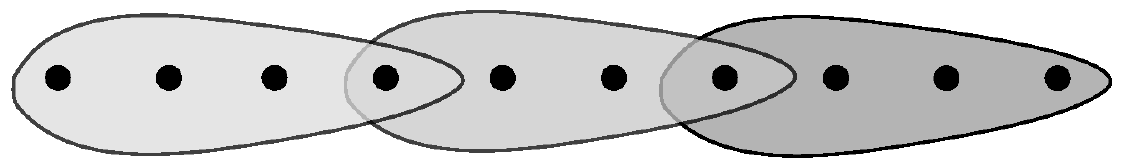}
\caption{A 4-uniform loose path $P_3^{(4)}$.}
\label{fig:loose}
\end{figure}

\subsection{Known results for graphs}
For graphs, determining the Ramsey number $R(P_\ell^{(2)},r)$ is a well-known problem that attracted a lot of attention.
It was shown by \cite{GG67} that
\begin{equation*}%\label{thm:gg}
R(P_\ell^{(2)},2) = \left\lfloor \frac{3\ell+1}{2} \right\rfloor.
\end{equation*}
For three colors  \cite{FL07} proved that $R(P_\ell^{(2)},3)\approx 2\ell$. Soon after,
\cite{GRSS07,GRSS07b} determined this number exactly, showing that for all sufficiently large $\ell$
\begin{equation}\label{thm:grss}
R(P_\ell^{(2)},3) =
\begin{cases}
2\ell+1 & \text{ for even } \ell,\\
2\ell & \text{ for odd } \ell,
\end{cases}
\end{equation}
as conjectured earlier by \cite{FS75}.  For $r\ge 4$ much less is known. A celebrated Tur\'an-type  result of \cite{EG59} implies that
\begin{equation}\label{EG}
R(P^{(2)}_\ell,r) \le r\ell.
 \end{equation}
 Recently, this was slightly improved by \cite{S15} and, subsequently, by \cite{DJR2017} who showed that for all sufficiently large $\ell$,
\begin{equation}\label{thm:djr}
R(P_\ell^{(2)};r) \le (r-1/4)(\ell+1).
\end{equation}

\subsection{Known results for hypergraphs}
 Let us first recall what is known about $R(P_\ell^{(k)},r)$ for $k\ge 3$. For two colors, \cite{GR2012} considered only paths of length $\ell=2,3,4$ and proved that $R(P_2^{(k)},2)=2k-1$, $R(P_3^{(k)},2)=3k-1$, and $R(P_4^{(k)},2)=4k-2$.
Later, for $k=3$ or $k\ge8$, and  $\ell\ge 3$, \cite{OS2014, OS2017} determined this number completely:
\[
R(P_\ell^{(k)},2) = (k-1)\ell + \left\lfloor \frac{\ell+1}{2}  \right\rfloor,
\]
and conjectured that the above formula is also valid for $k=4,5,6,7$.

For an arbitrary number of colors there are only few results and mainly for very short paths. The following is known.

 For $\ell=2$ and $k=3$ (so called \emph{bows}), \cite{AGLM2014} determined the value of $R(P_2^{(3)},r)$ for an infinite subsequence of integers $r$ (including $2\le r\le 10$) and for $r\to\infty$ they showed that $R(P_2^{(3)},r)\approx \sqrt{6r}$.

For $k\ge4$, and large $r$, the Ramsey number $R(P_2^{(k)},r)$ can be easily upper bounded by a standard application of Tur\'an  numbers (by  counting the average number of edges per color). Recall that for a given $k$-uniform hypergraph~$H$, the \emph{Tur\'an number}, $\ex_k(n; H)$, is the maximum number of edges in an $n$ vertex $k$-uniform hypergraph with no copy of~$H$.
It was proved by \cite{F1977} that $\ex_k(n;P_2^{(k)})=\binom{n-2}{k-2}$ for $n$ sufficiently large, from which it follows quickly that $R(P_2^{(k)},r)\le\sqrt{k(k-1)r}$.

For $\ell\ge3$, a similar approach via Tur\'an numbers $\ex_k(n;P_\ell^{(k)})$, determined for large $n$ by \cite{FJS2014}, yields for  large $r$,
 \begin{equation}\label{TRL}
R(P_\ell^{(k)};r) \le k\ell r/2,
\end{equation}
and, slightly better for $\ell=3$,
 \begin{equation}\label{TR3}
R(P_3^{(k)};r) \le k r.
\end{equation}

In the smallest instance $k=\ell=3$, owing to the validity of formula $\ex_3(n;P_3^{(3)})=\binom{n-1}2$ for all $n\ge8$ (see \cite{JPR2016}), the above bound holds for all $r\ge3$:
\begin{equation}\label{TR}
R(P_3^{(3)},r)\le 3r.
\end{equation}
Recently,  \L{}uczak and Polcyn twice improved (\ref{TR}) significantly. First, in \cite{LP2017}, they showed that $R(P_3^{(3)},r)\le2r+O(\sqrt r)$, then, in \cite{LP2018}, they broke the barrier of $2r$ by proving the bound $R(P_3^{(3)},r)<1.98r$, both results for large $r$.
This still seems to be far from the true value which is conjectured to be equal to $r+6$, the current best lower bound.
In a series of papers Jackowska, Polcyn, and Ruci\'nski (\cite{JPR2016}, \cite{P2017}, \cite{PR2017}) confirmed this conjecture for  $r\le10$. Finally, for
$\ell=3$, $k$ arbitrary, and $r$ large, \cite{LPR2017} showed an upper bound $R(P_3^{(k)},r)\le250r$ which is independent of $k$.

 In the next section we  show a general upper bound, obtained iteratively for all $k\ge2$, starting from the Erd\H os-Gallai bound (\ref{EG}) $R(P^{(2)}_\ell,r) \le r\ell$.
\begin{theorem}\label{non}
For all $k\ge 2$, $\ell\ge3$, and $r\ge2$ we have $R(P_\ell^{(k)};r) \le (k-1)\ell r$.
\end{theorem}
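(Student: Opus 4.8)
The plan is to induct on the uniformity $k$, using the Erd\H os--Gallai bound (\ref{EG}), $R(P_\ell^{(2)};r)\le \ell r$, as the base case $k=2$ (which is exactly $(k-1)\ell r$ there). The inductive step I aim to establish is the recursive inequality
\begin{equation*}
R(P_\ell^{(k)};r)\le R(P_\ell^{(k-1)};r)+\ell r,
\end{equation*}
which telescopes at once to $(k-1)\ell r$.

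To prove the step, set $n=R(P_\ell^{(k-1)};r)+\ell r$, fix an arbitrary $r$-coloring $c$ of the edges of $K_n^{(k)}$, and split the vertex set as $V=A\cup B$ with $|B|=\ell r$ and $|A|=R(P_\ell^{(k-1)};r)$. The idea is to build the desired $k$-uniform loose path by taking a $(k-1)$-uniform loose path inside $A$ and \emph{inflating} each of its edges by one private vertex from the reservoir $B$: if a loose $P_\ell^{(k-1)}$ with edges $f_1,\dots,f_\ell\subseteq A$ is augmented by pairwise distinct vertices $b_1,\dots,b_\ell\in B$, then $\{f_1\cup\{b_1\},\dots,f_\ell\cup\{b_\ell\}\}$ is again a loose path, since the intersection pattern among the $f_j$ is preserved and the distinct $b_j$ create no new coincidences.

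To locate such a path I would first define an auxiliary $r$-coloring $\chi$ of the $(k-1)$-subsets of $A$. For each $(k-1)$-set $f\subseteq A$, the $\ell r$ augmentations $f\cup\{b\}$, $b\in B$, receive colors in $\{1,\dots,r\}$, so by the pigeonhole principle some color occurs at least $\ell$ times; let $\chi(f)$ be such a color. Applying the inductive hypothesis to $\chi$ on $A$ yields a $\chi$-monochromatic loose path $P_\ell^{(k-1)}$ with edges $f_1,\dots,f_\ell$ and common color $i$, and by construction each set $S_j=\{b\in B: c(f_j\cup\{b\})=i\}$ has size at least $\ell$.

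The remaining and most delicate point is to choose the private vertices $b_j$ \emph{distinct}, i.e.\ to extract a system of distinct representatives from $S_1,\dots,S_\ell$; this is exactly what guarantees that the inflated path stays loose rather than degenerating. Here I would invoke Hall's theorem: for any subfamily indexed by $T\subseteq\{1,\dots,\ell\}$ one has $\bigl|\bigcup_{j\in T}S_j\bigr|\ge \max_{j\in T}|S_j|\ge \ell\ge |T|$, so Hall's condition holds and an SDR $b_1,\dots,b_\ell$ exists. Then $e_j=f_j\cup\{b_j\}$ are pairwise-compatible edges, all of color $i$, forming the sought monochromatic $P_\ell^{(k)}$. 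The crux of the argument is thus the interplay between reserving $|B|=\ell r$ vertices (so that pigeonhole delivers candidate sets of size $\ge\ell$) and Hall's theorem (turning those sizes into distinct connectors); once this is set up, the path and monochromaticity checks are routine.
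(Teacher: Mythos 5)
Your proposal is correct and follows essentially the same route as the paper: induction on $k$ from the Erd\H{o}s--Gallai base case, a split $V=A\cup B$ with a majority-color auxiliary coloring of the $(k-1)$-sets in $A$, and an inflation of the resulting monochromatic $P_\ell^{(k-1)}$ by distinct private vertices from $B$ (the paper packages this as its ``selfish extension'' lemma, using the marginally smaller reservoir $r(\ell-1)+1$ in place of your $\ell r$, which still telescopes to $(k-1)\ell r$). Your explicit Hall's-theorem justification of the system of distinct representatives is a fine, if slightly heavier, substitute for the greedy choice the paper leaves implicit.
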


Theorem~\ref{non} can be easily improved for $r\ge 3$ provided $\ell$ is large. Using~\eqref{thm:grss} instead of (\ref{EG}), we obtain for three colors that
\[
R(P_\ell^{(k)};3) \le (3k-4)\ell,
\]
and for $r\ge4$, by~\eqref{thm:djr},
\[
R(P_\ell^{(k)};r) \le (k-1)\ell r - \ell/4.
\]
On the other hand, for large~$r$, the bound (\ref{TRL}) is roughly
 twice better than the one in Theorem~\ref{non}.

\subsection{Constructive bounds}

In this paper we are mostly interested in \emph{constructive} bounds which means that on the cost of possibly enlarging the order of the complete hypergraph, we would like to efficiently find a monochromatic copy of a target hypergraph $F$ in every coloring. Clearly, by examining all copies of $F$ in $K^{(k)}_n$ for $n\ge R(F;r)$, we can always find a monochromatic one in time $O(n^{|V(F)|})$. Hence, we are interested in complexity not depending on $F$, preferably $O(n^k)$.
Given a $k$-graph $F$, a constant $c>0$ and integers $r$ and $n$, we say that a property $\mathcal{R}(F,r,c,n)$ holds if there is an algorithm such that for every $r$-edge-coloring of the edges of $K_n^{(k)}$, it finds a monochromatic copy of $F$ in time at most $cn^k$. For graphs, a constructive result of this type can be deduced from the proof of Lemma 3.5 in \cite{DP2017}.

\begin{theorem}[\cite{DP2017}]\label{con3}
There is a constant $c>0$ such that for all $\ell\ge3$, $r\ge 2$, and $n\ge 2^{r+1} \ell$, property $\mathcal{R}(P_\ell^{(2)},r,c,n)$ holds.
\end{theorem}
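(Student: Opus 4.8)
The existence of a monochromatic $P_\ell^{(2)}$ under the hypothesis $n\ge 2^{r+1}\ell$ is not the difficulty here: it already follows from the Erd\H os--Gallai theorem, since averaging over the $r$ color classes produces one class with at least $\binom{n}{2}/r$ edges, and this comfortably exceeds the Tur\'an threshold $\ex_2(n;P_\ell^{(2)})\le(\ell-1)n/2$ once $n\ge 2^{r+1}\ell$ (in fact once $n\gtrsim 2r\ell$). The whole point of Theorem~\ref{con3} is to turn this counting argument into an algorithm running in time $cn^2$ with a constant $c$ independent of $\ell$, $r$ and $n$. My plan is therefore to revisit the standard proof of the Erd\H os--Gallai bound and verify that each step can be implemented in quadratic time, the key observation being that although finding a \emph{longest} path is hard, in a graph of minimum degree at least $\ell$ \emph{any} maximal path already has length at least $\ell$, and a maximal path is trivial to produce greedily.

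Concretely, I would proceed in three stages. First, read the coloring and, examining each of the $\binom{n}{2}$ edges once, tally the number of edges in each color and pick a densest color $i$, which has at least $\binom{n}{2}/r$ edges. Second, work inside the graph $G_i$ spanned by the color-$i$ edges and repeatedly delete vertices whose current degree in $G_i$ is smaller than $\ell$, that is, compute the $\ell$-core of $G_i$. Since every deleted vertex carries away fewer than $\ell$ edges, deleting all $n$ vertices would remove fewer than $\ell n$ edges in total, whereas the class contains more than $\ell n$ edges once $n\ge 2^{r+1}\ell$; hence the peeling cannot exhaust the class, and the surviving core is nonempty with minimum degree at least $\ell$. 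Third, starting from any core vertex, greedily extend a path one vertex at a time, always moving to an unused core-neighbor, until the current endpoint has no unused core-neighbor left. At that moment all of its (at least $\ell$) core-neighbors lie on the path, so the path has at least $\ell+1$ vertices, and its first $\ell$ edges form the desired monochromatic $P_\ell^{(2)}$.

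It remains to bound the running time and to pin down the constant. Each of the three stages touches every edge only a bounded number of times: the tally is a single pass, the core peeling can be run with a bucket queue keyed by degree so that each edge triggers $O(1)$ work over the whole process, and the greedy extension adds each vertex at most once while scanning its adjacencies. Thus each stage costs $O(n^2)$, and since there are only three of them the total is $cn^2$ for an absolute constant $c$; crucially, the $r$ colors enter only through the choice of the densest class and do \emph{not} multiply the cost. The one point that requires genuine care---and the natural obstacle to a careless implementation---is precisely keeping $c$ independent of $r$ and $\ell$: a proof organized as an induction on the number of colors would run $r$ rounds and naively cost $O(rn^2)$, so one must either argue directly as above or, as in the proof of Lemma~3.5 in \cite{DP2017}, let the induction recurse on vertex sets that shrink by a constant factor at each step, so that the per-round costs form a geometric series summing to $O(n^2)$. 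This geometric shrinking is also what makes $2^{r+1}\ell$ the natural threshold to quote, even though the existence statement alone holds well below it.
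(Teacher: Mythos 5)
Your proof is correct, and it is worth noting up front that the paper itself does not prove this statement at all: Theorem~\ref{con3} is imported from \cite{DP2017} (the authors say it "can be deduced from the proof of Lemma 3.5" there), and the underlying method --- which the paper then generalizes to hypergraphs in Section~3 --- is a depth-first-search argument run colour by colour, where each round either produces a monochromatic path or splits off a large vertex set seeing one fewer colour; the sets shrink geometrically, which is exactly where the threshold $2^{r+1}\ell$ comes from, as you correctly guessed. Your route is genuinely different and more elementary: pick the densest colour class (at least $\binom{n}{2}/r$ edges), peel to its $\ell$-core (nonempty because full peeling destroys fewer than $\ell n$ edges while $\binom{n}{2}/r>\ell n$ once $n\ge 2r\ell+2$, and $2^{r+1}\ell\ge 4r\ell$ for $r\ge2$), and take any greedily-built maximal path, whose endpoint traps all of its $\ge\ell$ core-neighbours on the path. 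Each stage is a constant number of passes over the edge set, so the $cn^2$ bound with $c$ independent of $r$ and $\ell$ is clear, and --- a genuine bonus --- your argument proves the stronger statement that $n\ge 2r\ell+2$ already suffices, well below the quoted $2^{r+1}\ell$. What the DFS approach buys instead is a template that extends to loose hyperpaths and to the partite structures needed in Theorem~\ref{con}, which the core/Erd\H{o}s--Gallai argument does not obviously provide; two small cosmetic points are that existence via Tur\'an averaging already holds for $n>r(\ell-1)+1$ rather than your quoted $n\gtrsim 2r\ell$ (harmless, since the core argument needs the larger bound anyway), and that your closing speculation about the per-round costs in \cite{DP2017} forming a geometric series is a reasonable but unverified reading of that proof --- your own three-stage argument does not depend on it.
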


Our goal is to obtain  similar constructive results for loose hyperpaths.
In Section 2, we show that, by replacing the Erd\H os-Gallai bound (\ref{EG}) with the assumption on $n$ given in Theorem~\ref{con3}, the proof of Theorem~\ref{non} can be easily adapted to yield a constructive result.
\begin{theorem}\label{con2}
There is a constant $c>0$ such that for all $k\ge 2$, $\ell\ge3$, $r\ge 2$, and $n\ge 2^{r+1}\ell + (k-2)\ell r$, property $\mathcal{R}(P_\ell^{(k)},r,c,n)$ holds.
\end{theorem}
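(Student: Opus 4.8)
The plan is to induct on the uniformity $k$, mirroring the iterative argument behind Theorem~\ref{non} but with the constructive graph result as the base case and an explicit extension procedure in place of the Erd\H os--Gallai bound. For $k=2$ the claim is exactly Theorem~\ref{con3}, since then $n\ge 2^{r+1}\ell$. So I would assume $k\ge3$ and that the statement holds for uniformity $k-1$, i.e.\ there is an algorithm witnessing $\mathcal{R}(P_\ell^{(k-1)},r,c,m)$ whenever $m\ge 2^{r+1}\ell+(k-3)\ell r$. Given an $r$-coloring $\chi$ of $K_n^{(k)}$ with $n\ge 2^{r+1}\ell+(k-2)\ell r$, I would reserve an arbitrary \emph{reservoir} $X\subseteq V$ with $|X|=\ell r$ and set $Y=V\setminus X$, so that $|Y|=n-\ell r\ge 2^{r+1}\ell+(k-3)\ell r$ meets the bound required for uniformity $k-1$. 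Then define an auxiliary $r$-coloring $\chi'$ of the complete $(k-1)$-uniform hypergraph on $Y$ as follows: for each $(k-1)$-set $S\subseteq Y$ consider the $\ell r$ distinct edges $S\cup\{x\}$, $x\in X$; since these receive $r$ colors, by pigeonhole some color occurs at least $\lceil \ell r/r\rceil=\ell$ times, and we let $\chi'(S)$ be the smallest such color. Applying the inductive algorithm to $(Y,\chi')$ yields a $\chi'$-monochromatic copy of $P_\ell^{(k-1)}$ with edges $f_1,\dots,f_\ell$, all of some color $c$.

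For the extension step, note that by construction each $f_i$ admits at least $\ell$ vertices $x\in X$ with $\chi(f_i\cup\{x\})=c$. I would select distinct representatives $x_1,\dots,x_\ell\in X$ greedily: when choosing $x_i$, at most $i-1\le\ell-1$ vertices of $X$ are already used, so among the $\ge\ell$ admissible candidates for $f_i$ at least one remains free. Setting $e_i=f_i\cup\{x_i\}$ produces $\ell$ edges, each of color $c$. Because the $x_i$ are pairwise distinct and lie in $X$, which is disjoint from $Y\supseteq f_1\cup\dots\cup f_\ell$, attaching them creates no new incidences: one checks directly that $e_i\cap e_j=f_i\cap f_j$ for all $i\neq j$. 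Hence $e_1,\dots,e_\ell$ inherit exactly the intersection pattern of the loose path $f_1,\dots,f_\ell$, so they form a monochromatic copy of $P_\ell^{(k)}$. This is also precisely why each inductive step costs exactly $\ell r$ extra vertices: the single choice $|X|=\ell r$ simultaneously guarantees the pigeonhole majority (needs $|X|\ge\ell r$) and enough room for the system of distinct representatives (needs $\ell$ admissible choices per edge).

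For the running time I would tabulate each auxiliary coloring once, so that queries are answered in $O(1)$ thereafter. Building and storing $\chi'$ inspects, for each of the $O(n^{k-1})$ sets $S\subseteq Y$, the $|X|=\ell r\le n$ edges $S\cup\{x\}$, costing $O(n^{k-1}\cdot\ell r)=O(n^k)$; the greedy selection of representatives costs only $O(\ell^2 r)$. The recursive call then tabulates its own coloring at cost $O(n^{k-1})$, and so on down the recursion; summing the tabulation costs over levels $j=k,k-1,\dots,3$ gives $\sum_j O(n^j)=O(n^k)$, to which the $O(n^2)$ base-case cost from Theorem~\ref{con3} contributes nothing of higher order. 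The resulting constant $c$ (enlarging the one from Theorem~\ref{con3} if necessary) witnesses $\mathcal{R}(P_\ell^{(k)},r,c,n)$.

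The combinatorial core—the auxiliary coloring plus the distinct-representative extension—is short and the obstacle is really one of bookkeeping: I expect the delicate point to be keeping the \emph{entire recursive scheme} genuinely within $O(n^k)$, since a naive on-the-fly evaluation of $\chi'$ inside the level-$(k-1)$ call (which itself builds $\chi''$ on top of $\chi'$, etc.) would nest the $O(\ell r)$ recomputation factors and inflate the complexity. Materializing each level's coloring before recursing, together with verifying $\ell r\le n$ so the tabulation phase stays $O(n^k)$, is the step that needs the most care.
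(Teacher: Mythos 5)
Your proposal is correct and follows essentially the same route as the paper: an induction on $k$ with Theorem~\ref{con3} as the base case, a reservoir/majority-color reduction of the $k$-uniform coloring to a $(k-1)$-uniform one (the paper uses a reservoir of size $r(\ell-1)+1\le \ell r$ rather than $\ell r$, which is why its Lemma~\ref{self} pigeonhole also yields $\ell$ candidates per edge), and a greedy choice of distinct extension vertices, with the same level-by-level tabulation argument keeping the total running time at $O(n^k)$.
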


Our second constructive bound (valid only for $r\le k$) utilizes a  more sophisticated algorithm.
\begin{theorem}\label{con}
There is a constant $c>0$ such that for all $k\ge 2$, $\ell\ge3$, $2\le r\le k$, and $n\ge k(\ell+1)r\left(1+\frac{1}{k-r+1}+\ln\left(1+\frac{r-2}{k-r+1}\right)\right)$, property $\mathcal{R}(P_\ell^{(k)},r,c,n)$ holds. For $r=2$, the bound on $n$ can be improved to $n\ge (2k-2)\ell+k$.
\end{theorem}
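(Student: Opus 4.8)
The plan is to build, simultaneously and greedily, one monochromatic loose path $Q_i$ in each color $i\in\{1,\dots,r\}$, always extending them from a common moving frontier, and to show that some $Q_i$ must reach length $\ell$ well before the vertex budget is exhausted. I would keep all $r$ paths anchored at a current frontier vertex $v$, so that $v$ is a free endpoint of each $Q_i$, together with a reservoir $U$ of as-yet-unused vertices. A single extension step takes a set $F\subseteq U$ of fresh vertices, queries the color $j$ of an edge $\{v\}\cup F$, appends this edge to $Q_j$ (which is legal since the edge meets $Q_j$ in exactly the endpoint $v$), and then re-anchors the frontier at a suitable vertex of $F$.

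The crux, and the place where the hypothesis $r\le k$ enters, is to guarantee that a query returning a ``wrong'' color is never wasted. Because an edge has $k\ge r$ vertices, after reserving one connector for each color there remain at least $k-r+1\ge 1$ free slots, and I would use these spare slots to carry the paths forward rather than only the single color that was just returned. Concretely, I would argue that with $k-r+1$ spare connectors available one can always re-establish the invariant that all $r$ paths share the updated frontier, so that every queried edge increases the \emph{potential} $\Phi=\sum_{i=1}^{r}\mathrm{length}(Q_i)$ by at least one. Once $\Phi\ge r(\ell-1)+1$, the pigeonhole principle forces some $Q_i$ to be a genuine loose path of length $\ell$, and the algorithm halts.

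The vertex accounting is then a coupon-collector-type estimate. As the frontier advances, an adversary can, at the cost of consuming fresh vertices, repeatedly block the color we are trying to advance; the number of connectors still available to the blocked colors shrinks from $k-1$ down to $k-r+1$, so the number of fresh vertices spent to raise $\Phi$ by one unit grows like $k/j$ as $j$ ranges over $k-1,k-2,\dots,k-r+1$. Summing over the $r(\ell-1)+1$ units of potential yields a total consumption of order $k\,r\,\ell\bigl(\tfrac1{k-1}+\cdots+\tfrac1{k-r+1}\bigr)$, and bounding this harmonic sum by $1+\tfrac1{k-r+1}+\ln\!\bigl(1+\tfrac{r-2}{k-r+1}\bigr)$ (after absorbing the leading edge cost and the $+1$ for the initial anchor) reproduces exactly the stated threshold on $n$. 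Each step performs $O(1)$ color queries and $O(k)$ bookkeeping, and since the number of steps is $O(n)$ while $k\ge 2$ forces $n\le n^{k}$ for large $n$, the total running time is $O(n^{k})$, giving property $\mathcal{R}(P_\ell^{(k)},r,c,n)$.

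The main obstacle I anticipate is the re-anchoring argument of the second paragraph: showing rigorously that the $k-r+1$ spare slots always suffice to keep all $r$ paths ending at a single frontier vertex, so that no query is wasted and the harmonic bound — rather than a far weaker geometric bound — is achieved. I expect this to require a careful choice of which vertex of each queried edge becomes the new connector, together with a monovariant ensuring that the pool of admissible connectors for each color degrades by at most one color per step. Finally, for $r=2$ the spare-slot count is $k-1$ throughout and the harmonic factor collapses; a direct two-path bookkeeping then shows that essentially no fresh vertices are wasted, yielding the sharper threshold $n\ge(2k-2)\ell+k$.
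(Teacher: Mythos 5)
Your proposal hinges on an invariant that you never establish and that, as far as I can see, cannot be maintained: that all $r$ monochromatic paths $Q_1,\dots,Q_r$ keep a \emph{common} free endpoint (the ``frontier'') so that every color query extends some $Q_j$ and the potential $\Phi=\sum_i \mathrm{length}(Q_i)$ rises by one per query. The moment an edge $\{v\}\cup F$ of color $j$ is appended to $Q_j$, the endpoint of $Q_j$ moves into $F$ while the other $r-1$ paths still end at $v$; to ``re-anchor'' them at the new frontier you would need edges of each color $i\ne j$ joining $v$ to the new frontier, which the adversary need not supply. The spare $k-r+1$ slots per edge do not address this, because the obstruction is about where each path terminates, not about how many vertices an edge has. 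The natural repair --- querying an edge through all $r$ current endpoints at once --- also fails: once an edge containing the endpoint $v_{i'}$ of $Q_{i'}$ has been added to $Q_i$, the vertex $v_{i'}$ lies on $Q_i$, so the next query through $v_{i'}$ cannot legally extend $Q_i$ (it would meet $Q_i$ in two vertices), and an adversary can exploit exactly this to waste queries. Since you yourself flag this re-anchoring step as the ``main obstacle,'' the argument as written has a hole at its center, and the harmonic accounting built on top of it (cost $k/j$ per unit of potential for $j=k-1,\dots,k-r+1$) is reverse-engineered from the target bound rather than derived from a concrete mechanism.

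For contrast, the paper proves the theorem by a quite different route: $r-1$ sequential DFS rounds, one color per round. Round $i$ greedily grows a path in color $i$ inside a multipartite host $K^{(k)}(W_{1,i},\dots,W_{i-1,i},V_i)$; stuck pendant vertices are harvested into a new class $W_{i,i}$ (with some moved to trash sets $T_i$, $S_i$), so that at the end of round $i$ the multipartite $k$-graph on $W_{1,i},\dots,W_{i,i},V_i$ contains no edge of colors $1,\dots,i$. After round $r-1$ everything left is color $r$ and the classes are large enough to contain $P_\ell^{(k)}$. The harmonic sums in the threshold arise there from the quantities $\tau_i$, which measure how much each class $W_{j,j}$ must be over-provisioned to survive erosion in all later rounds --- not from a per-query cost. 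If you want to pursue a potential-function proof you would need a fundamentally new idea for how a query of the ``wrong'' color still makes progress; the paper's answer is that a failed extension in color $i$ is a statement about \emph{all} edges of color $i$ at the stuck endpoints, and that structural information is banked in the partition rather than in a path of another color.
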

\noindent
Note that for $r=2$ the lower bound on $n$ in Theorem \ref{con} is very close to that in~Theorem~\ref{non}.
For $r=k$ the bound in Theorem \ref{con} assumes a simple form
$$n\ge k^2(\ell+1)(2+\ln(k-1).$$ Furthermore, when $r\le k-1$, one can show (see Claim~\ref{claim:cor}) that  $$\frac{1}{k-r+1}+\ln\left(1+\frac{r-2}{k-r+1}\right) \le \ln\left(1+\frac{r-1}{k-r}\right)$$ yielding the following corollary.

\begin{corollary}\label{cor:1}
There is a constant $c>0$ such that for all $k\ge 3$, $\ell\ge3$, $2\le r\le k-1$, and $n\ge k(\ell+1)r\left(1+\ln\left(1+\frac{r-1}{k-r}\right)\right)$, property $\mathcal{R}(P_\ell^{(k)},r,c,n)$ holds.
\end{corollary}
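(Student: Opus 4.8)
The plan is to derive Corollary~\ref{cor:1} directly from Theorem~\ref{con}, the only real content being the elementary inequality asserted in Claim~\ref{claim:cor}. Since the hypothesis of Theorem~\ref{con} is a lower bound of the form $n \ge k(\ell+1)r\cdot A$ with $A = 1 + \frac{1}{k-r+1} + \ln\!\left(1+\frac{r-2}{k-r+1}\right)$, it suffices to show that $A \le 1 + \ln\!\left(1+\frac{r-1}{k-r}\right) =: B$ whenever $2 \le r \le k-1$; then every $n \ge k(\ell+1)r\cdot B$ automatically satisfies $n \ge k(\ell+1)r\cdot A$, so the same algorithm (and the same constant $c$) witnesses $\mathcal{R}(P_\ell^{(k)},r,c,n)$.

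First I would simplify the two logarithmic arguments. A short computation gives
\[
1+\frac{r-2}{k-r+1}=\frac{k-1}{k-r+1}
\qquad\text{and}\qquad
1+\frac{r-1}{k-r}=\frac{k-1}{k-r},
\]
so that, after cancelling the common term $\ln(k-1)$ from both sides, the desired inequality $A\le B$ collapses to
\[
\frac{1}{k-r+1}\;\le\;\ln(k-r+1)-\ln(k-r)=\ln\!\left(1+\frac{1}{k-r}\right).
\]
Writing $m=k-r$ (here $m\ge1$ is exactly the hypothesis $r\le k-1$), this is the single inequality
\[
\frac{1}{m+1}\;\le\;\ln\!\left(\frac{m+1}{m}\right).
\]

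This last inequality is a textbook fact, and I would prove it by the integral comparison $\ln\frac{m+1}{m}=\int_m^{m+1}\frac{dt}{t}\ge\frac{1}{m+1}$, the integrand being at least $\tfrac{1}{m+1}$ throughout $[m,m+1]$; alternatively one may invoke $\ln(1+x)\ge\frac{x}{1+x}$ at $x=1/m$. This completes the verification of Claim~\ref{claim:cor} and hence of the corollary. I do not expect any genuine obstacle here: the corollary is a cosmetic repackaging of Theorem~\ref{con}, trading a marginally weaker (larger) threshold on $n$ for the cleaner closed form $1+\ln\!\left(1+\frac{r-1}{k-r}\right)$. The only points requiring care are the algebraic simplification above and noting that $m\ge1$ is guaranteed precisely by restricting to $r\le k-1$ (the excluded case $r=k$ would force $m=0$ and make the right-hand side undefined).
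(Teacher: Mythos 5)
Your proposal is correct and follows essentially the same route as the paper: reduce the corollary to Theorem~\ref{con} via the comparison of thresholds, which after the algebraic simplification $1+\frac{r-2}{k-r+1}=\frac{k-1}{k-r+1}$ and $1+\frac{r-1}{k-r}=\frac{k-1}{k-r}$ collapses to $\frac{1}{k-r+1}\le\ln\bigl(1+\frac{1}{k-r}\bigr)$, exactly as in Claim~\ref{claim:cor}. The only (immaterial) difference is that you verify this last inequality by integral comparison while the paper uses the monotonicity of $f(x)=\ln\bigl(1+\frac1x\bigr)-\frac{1}{x+1}$.
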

\noindent
We can further replace the lower bound on $n$ in Corollary~\ref{cor:1} by (slightly weaker but simpler)
$$n\ge k(\ell+1)r(1+\ln r).$$

Observe that in several instances the lower bound in Theorem~\ref{con} (and also in Corollary~\ref{cor:1}) is significantly better (that means smaller) than the one in Theorem~\ref{con2} (for example for large $k$ and  $k/2 \le r \le  k$). On the other hand, for some instances bounds in Theorems~\ref{con2} and \ref{con} are basically the same. For example, for fixed $r$, large~$k$ and $\ell\ge k$ the lower bound is $k\ell r + o(k\ell)$. This also matches the bound from Theorem~\ref{non}.

\section{Proof of Theorems \ref{non} and \ref{con2}}

For completeness, we begin with proving bounds (\ref{TRL})-(\ref{TR}). 

\begin{proposition} For all $k\ge3$ and $\ell\ge3$, inequalities (\ref{TRL}) and (\ref{TR3}) hold for large $r$, while inequality (\ref{TR3}) holds for all $r$.
\end{proposition}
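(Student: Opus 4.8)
The plan is to reduce each of the three bounds to the corresponding Tur\'an number by a standard averaging argument. Fix an arbitrary $r$-edge-coloring of $K_n^{(k)}$. Since the edges are partitioned into $r$ color classes and there are $\binom{n}{k}$ of them, by pigeonhole some class, say of color $i$, contains at least $\binom{n}{k}/r$ edges. If this quantity strictly exceeds $\ex_k(n;P_\ell^{(k)})$, then by the very definition of the Tur\'an number the subhypergraph of color $i$ must contain a copy of $P_\ell^{(k)}$, i.e.\ a monochromatic loose path. Hence for each claimed bound it suffices to check that, at the value of $n$ equal to the right-hand side, we have $\binom{n}{k}/r>\ex_k(n;P_\ell^{(k)})$.

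Next I would substitute the Tur\'an number determined by F\"uredi, Jiang and Seiver. For $n$ sufficiently large it has the form $\ex_k(n;P_\ell^{(k)})=\binom{n}{k}-\binom{n-s}{k}+c$, where $s=\lfloor(\ell-1)/2\rfloor$ counts the vertices of a fixed covering set and $c\in\{0,1\}$ (with $c=1$ exactly when $\ell$ is even). Plugging this in, the inequality to verify becomes $\binom{n-s}{k}-c>(1-1/r)\binom{n}{k}$. Writing $\binom{n-s}{k}/\binom{n}{k}=\prod_{i=0}^{k-1}\bigl(1-\tfrac{s}{n-i}\bigr)$, which for large $n$ behaves like $1-sk/n+O(n^{-2})$, the condition reduces essentially to $sk/n<1/r$, that is $n>skr$. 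Since $s=\lfloor(\ell-1)/2\rfloor<\ell/2$ for every $\ell\ge3$, one has $skr<k\ell r/2$, so the choice $n=k\ell r/2$ makes the inequality strict once $r$ (hence $n$) is large enough for the F\"uredi--Jiang--Seiver formula to apply; this yields (\ref{TRL}). For $\ell=3$ we have $s=1$ and $c=0$, so $\ex_k(n;P_3^{(k)})=\binom{n-1}{k-1}$ exactly, and using $\binom{n}{k}=\tfrac{n}{k}\binom{n-1}{k-1}$ the target inequality $\binom{n}{k}/r>\binom{n-1}{k-1}$ collapses to $n>kr$, which gives (\ref{TR3}) for large $r$.

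For the bound valid for \emph{all} $r$ I would exploit that in the smallest case $k=\ell=3$ the identity $\ex_3(n;P_3^{(3)})=\binom{n-1}{2}$ holds already for every $n\ge8$, not merely asymptotically. Rerunning the computation of the previous paragraph with $s=1$ then needs no largeness hypothesis beyond $3r\ge8$, i.e.\ $r\ge3$, so the bound $R(P_3^{(3)},r)\le3r$ follows for all admissible $r$.

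I expect the only genuine work to lie in two places. First, turning the heuristic $sk/n<1/r$ into a rigorous inequality precisely at $n=k\ell r/2$ (respectively $n=kr$), controlling the lower-order terms of $\prod_{i=0}^{k-1}(1-s/(n-i))$ together with the additive constant $c$; because these error terms are of relative order $O(1/r)$, they are exactly what the phrase ``for large $r$'' absorbs. Second, and this is the subtle boundary point, at $n=kr$ one computes $\binom{kr}{k}/r=\binom{kr-1}{k-1}$, so the averaging inequality is an \emph{equality} rather than strict; to reach the clean bound $\le kr$ one must note that equality would force every color class to be a full extremal star, and $r$ such stars cannot cover all edges of $K_{kr}^{(k)}$ (an edge avoiding all $r$ star-centers exists since $kr-r\ge k$), so some class strictly exceeds the Tur\'an number and the path appears.
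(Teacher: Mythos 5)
Your proposal follows essentially the same route as the paper: average the $\binom{n}{k}$ edges over the $r$ color classes, compare the majority class with the F\"uredi--Jiang--Seiver Tur\'an number (reducing to $n>tkr$ with $t=\lfloor(\ell+1)/2\rfloor-1<\ell/2$), and for $\ell=3$ resolve the equality case at $n=kr$ by noting that not all $r$ color classes can be the unique extremal full star, with the $k=\ell=3$ case for all $r\ge3$ coming from the validity of $\ex_3(n;P_3^{(3)})=\binom{n-1}{2}$ for every $n\ge8$. One small slip: for even $\ell$ the correction term in the Tur\'an formula is $\delta_\ell\binom{n-k-t}{k-2}$, not a constant $c\in\{0,1\}$, but as it is $O(n^{k-2})$ it is absorbed by the same lower-order error analysis and nothing breaks.
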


\begin{proof} It has been proved in \cite{FJS2014} and \cite{KMV2017} that for all $k\ge3$ and $\ell\ge3$, except for $k=\ell=3$ (but see Acknowledgements in \cite{JPR2016}), and for $n$ sufficiently large
$$\ex_k(n; P_\ell^{(k)})=\binom nk-\binom{n-t}k+\delta_\ell\binom{n-k-t}{k-2},$$
where $\delta_\ell=0$ if $\ell$ is odd and, otherwise, $\delta_\ell=1$, while $t=\lfloor\tfrac{\ell+1}2\rfloor-1$. Regardless of the parity of $\ell$, for every $\eps>0$ and sufficiently large $n$, this Tur\'an number is smaller than $ (1+\eps)tn^{k-1}/(k-1)!$. With some foresight, we require that $\eps\le(2\ell-1)^{-1}$.
Thus,  for fixed $\eps>0$, $k\ge 2$ and $\ell \ge 3$ and all sufficiently large $r$,  the average number of edges per color in an $r$-coloring of the complete $k$-graph $K_n^{(k)}$ with $n\ge \ell kr/2$ is
$$\frac{\binom nk}r\ge(1-\eps)\frac{n^k}{rk!}\ge(1-\eps)\frac{\ell n^{k-1}}{2(k-1)!}\ge (1+\eps)\frac{(\ell-1)n^{k-1}}{2(k-1)!}>\ex_k(n; P_\ell^{(k)}),$$
 which proves (\ref{TRL}).

For $\ell=3$, the formula for $\ex_k(n; P_\ell^{(k)})$ simplifies to $\ex_k(n; P_3^{(k)})=\binom{n-1}{k-1}$ and we have
$$\frac{\binom nk}r\ge\binom{n-1}{k-1}$$ already for $n\ge kr$. Since the only extremal $k$-graph in this case is the full star and it is impossible that all colors are stars, we get  (\ref{TR3}).

Finally, for $\ell=k=3$ it was proved in \cite{JPR2016} that $\ex_3(n; P_3^{(3)})=\binom{n-1}{2}$ for all $n\ge8$ and the same argument as above applies to all $r\ge3$. 
\end{proof}

Preparing for the proof of Theorem \ref{non}, recall that
\cite{EG59} showed that the Tur\'an number for a graph path $P_\ell^{(2)}$ satisfies the bound $\ex_2(n;P^{(2)}_\ell)\le\tfrac12(\ell-1)n$. This immediately yields, by the same  argument as in the above proof,
that the majority color in $K_{r\ell}$ contains a copy of $P_\ell^{(2)}$, and consequently $R(P_\ell^{(2)};r)\le r\ell$.
We are going to use this result by blowing up the edges of a graph to obtain a 3-graph, then blowing the edges of a 3-graph to obtain a 4-graph, and so on. Formally, we call an edge of a hypergraph \emph{selfish} if it contains a vertex of degree one, that is, a vertex which belongs exclusively to this edge. We call a hypergraph $H$ \emph{selfish} if every edge of $H$ is selfish. Clearly, for $k\ge3$ and $\ell\ge 1$, the loose path $P^{(k)}_\ell$ is selfish.

A selfish $k$-graph $H$ can be reduced to a $(k-1)$-graph $G_H$ by removing one vertex of degree one from each edge of $H$.
Inversely, every $(k-1)$-graph $G$ can be turned into a selfish $k$-graph $H$, called \emph{a selfish extension of $G$}, such that $G=G_H$, by adding $|E(G)|$ vertices, one to each edge of $G$.
Note that $|E(H)|=|E(G_H)|$.

\begin{figure}
\centering
\includegraphics[scale = 0.7]{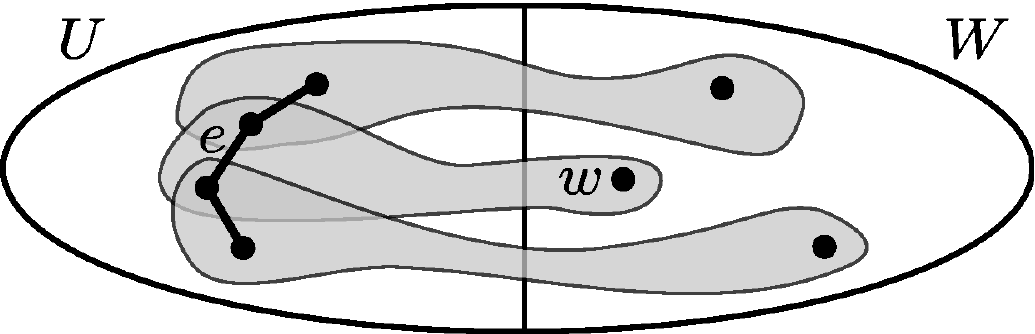}
\caption{Proving Lemma~\ref{self}.}
\label{fig:self}
\end{figure}

\begin{lemma}\label{self}
For a given integer $k\ge 3$, let $H$ be a selfish $k$-graph with $G=G_{H}$. Then
\[
R(H;r)\le R(G;r)+r(|E(H)|-1)+1.
\]
\end{lemma}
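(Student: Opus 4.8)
The plan is to set $n = R(G;r) + r(|E(H)|-1) + 1$ and show that any $r$-coloring of $K_n^{(k)}$ contains a monochromatic copy of $H$. The natural strategy, given the selfish structure, is to build the copy of $H$ one edge at a time, using a monochromatic copy of $G$ produced by the Ramsey number $R(G;r)$ as the ``backbone'' and then extending each backbone edge of $G$ to a $k$-edge of $H$ by attaching a fresh degree-one vertex. Concretely, I would reserve a set $A$ of $R(G;r)$ vertices and a disjoint reservoir $B$ of the remaining $r(|E(H)|-1)+1$ vertices; the copy of $G_H$ will live inside $A$, and the extra degree-one vertices of the selfish extension will be drawn from $B$.

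First I would define an auxiliary $r$-coloring of $K_{R(G;r)}^{(k-1)}$ on the vertex set $A$. For each $(k-1)$-subset $f \subseteq A$, I need to assign it a color; the idea is to color $f$ by a color that appears ``often enough'' among the $k$-edges $f \cup \{b\}$ with $b \in B$, so that after finding a monochromatic copy of $G$ in color (say) $i$, each of its $|E(G)| = |E(H)|$ edges can be independently extended into $B$ by a $k$-edge of the same color $i$, these extensions using distinct vertices of $B$. This is where the size of $B$ enters: with $|B| = r(|E(H)|-1)+1$, a pigeonhole/greedy argument should guarantee that the extensions can be carried out sequentially without reusing a $B$-vertex. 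The precise coloring rule is the delicate design choice — I would color $f$ by the majority color among its $|B|$ extensions, so that the chosen color has at least $|B|/r$ available extension vertices, and then argue that $|B|/r = |E(H)|-1 + 1/r > |E(H)|-1$ leaves enough room to extend all $|E(H)|$ edges greedily while discarding at most one already-used vertex per previous edge.

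The execution then runs as follows. Apply the definition of $R(G;r)$ to the auxiliary coloring on $A$ to obtain a monochromatic copy of $G$, say in color $i$, with edge set $e_1, \dots, e_{|E(H)|}$ (as $(k-1)$-sets in $A$). Process these edges in order; for each $e_j$, by the majority-color property at least $|B|/r \ge |E(H)|$ vertices $b \in B$ satisfy that $e_j \cup \{b\}$ has color $i$, and since fewer than $|E(H)|$ vertices of $B$ have been used by earlier extensions, at least one such $b$ is still free. Attaching it yields a $k$-edge $e_j \cup \{b_j\}$ of color $i$ with $b_j$ of degree one, and the union over $j$ is exactly a selfish extension of the monochromatic copy of $G$, hence a monochromatic copy of $H$ in color $i$.

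The main obstacle I anticipate is making the counting in the auxiliary coloring and the greedy extension fit together \emph{exactly}, so that the reservoir size $r(|E(H)|-1)+1$ is both sufficient and tight: one must verify that the majority color of each backbone edge guarantees a pool of extension vertices whose size strictly exceeds the number possibly blocked by previously placed edges, and that the single ``$+1$'' in the bound is precisely what prevents the last extension from failing. The figure referenced as Figure~\ref{fig:self} presumably illustrates this edge-by-edge extension, and I would lean on it to keep the bookkeeping of used versus free $B$-vertices transparent.
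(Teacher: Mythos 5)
Your proposal is correct and follows essentially the same argument as the paper: partition the vertices into a backbone set of size $R(G;r)$ and a reservoir of size $r(|E(H)|-1)+1$, induce a $(k-1)$-uniform coloring by majority vote over reservoir extensions, find a monochromatic copy of $G$, and greedily attach distinct degree-one vertices (each backbone edge has at least $\lceil |B|/r\rceil = |E(H)|$ same-colored extensions, so at most $|E(H)|-1$ blocked choices never exhaust the pool). The only cosmetic slip is writing ``$|B|/r \ge |E(H)|$'' in the execution step, where you mean the integer count is at least $\lceil |B|/r\rceil = |E(H)|$, which you had already justified correctly.
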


\begin{proof} Let $n=R(G;r)+r(|E(H)|-1)+1$, $V=U\cup W$, $U\cap W=\emptyset$,  $|V|=n$, $|U|=R(G;r)$, and $|W|=r(|E(H)|-1)+1$.  Consider an $r$-coloring of the edges of $K_n^{(k)}$. For every $(k-1)$-tuple $e$ of vertices in $U$, we choose the most frequent color on all the $k$-tuples $e\cup\{w\}$, $w\in W$ (see Figure~\ref{fig:self}). This induces an $r$-coloring of the edges of the clique $K_{|U|}^{(k-1)}$ on vertex set $U$. By the definition of $R(G;r)$, this yields  a monochromatic (say, \emph{red}) copy $G'$ of $G$ (in the induced coloring).  Note that for each edge $e$ of $G'$ the \emph{red} color appears on at least $|E(H)|$ $k$-tuples containing $e$ and, thus, one can find a \emph{red} selfish extension $H'$ of~$G'$ which is isomorphic to $H$.
\end{proof}

\begin{proof}[of Theorem \ref{non}.]
We use induction on $k$. For $k=2$ the theorem coincides with the Erd\H os-Gallai result (\ref{EG}). Assume that for some $k\ge3$ we have $R(P_\ell^{(k-1)};r) \le (k-2)\ell r$, observe that  $P_\ell^{(k-1)}=G_{P_\ell^{(k)}}$, and apply Lemma \ref{self} obtaining
\[
R(P_\ell^{(k)};r) \le R(P_\ell^{(k-1)};r)+r(\ell-1)+1
\le (k-2)\ell r + \ell r = (k-1)\ell r.
\]
\end{proof}

\begin{proof}[of Theorem~\ref{con2}]
To get the desired lower bound on $n$ it suffices  to replace in the base step of induction the Erd\H{o}s-Gallai bound (\ref{EG}) by the one from Theorem~\ref{con3} which yields
\[
R(P_\ell^{(k)};r) \le R(P_\ell^{(k-1)};r)+r(\ell-1)+1
\le 2^{r+1}\ell+(k-3)\ell r + \ell r =2^{r+1}\ell+ (k-2)\ell r.
\]

It remains to show that the performance time does not exceed $cn^k$ for some $c>0$, which, by Theorem \ref{con3}, is the case when $k=2$. Without loss of generality, assume that $c\ge1$. Suppose that for some $k\ge3$ it holds for $(k-1)$-uniform hypergraphs. Similarly as in Lemma~\ref{self}, we arbitrarily partition $V=U \cup W$ with $|U| \ge2^{r+1}\ell + (k-3)\ell r$ and $|W| = r(\ell-1)+1$. Next we color each $(k-1)$-tuple $e$ in $U$ by the most frequent color on the $k$-tuples $e\cup\{w\}$, $w\in W$. This requires no more than
$$\binom {|U|}{k-1}\times |W|\le n^k/(k-1)!$$ steps. Finally, by inductive assumption, in time at most $cn^{k-1}$ we find a monochromatic copy of $P_{\ell}^{(k-1)}$ in $U$ which can be extended to a monochromatic $P_{\ell}^{(k)}$ in no more than
$$
\ell|W| \le r\ell^2\le 2^{r-1}\ell^2\le 2^{r-1} (2^{-r-1}n)^2  = 2^{-r-3}n^2
$$
steps. Altogether, the performance time, using bounds $r\ge2$, $k\ge3$, $\ell\ge3$, and so $n\ge30$, is
$$n^{k}/(k-1)! + cn^{k-1}+2^{-r-3}n^2\le(1/2+c/30 +1/960)n^k\le c n^k,$$ as required.
\end{proof}

\section{Proof of Theorem \ref{con}}

The proof is based on the depth first search (DFS) algorithm. Such approach for graphs was first successfully applied by \cite{BKS2012b, BKS2012} and for Ramsey-type problems by \cite{DP15,DP2017}.

Given integers $k$ and $2\le m\le k$, and disjoint sets of vertices $W_1,\dots, W_{m-1}$, $V_m$, \emph{an $m$-partite complete $k$-graph} $K^{(k)}(W_1,\dots, W_{m-1},V_m)$ consists of all $k$-tuples of vertices with exactly one element in each $W_i$, $i=1,\dots,m-1$, and $k-m+1$ elements in $V_m$. Note that if $|W_i|\ge\ell$, $i=1,\dots,m-1$, and $|V_m|\ge \ell(k-m)+1$ for $m\le k-1$ (or $|V_m|\ge\ell$ for $m=k$), then $K^{(k)}(W_1,\dots, W_{m-1},V_m)$ contains~$P_\ell^{(k)}$. Indeed, if $m\le k-1$, then we inductively find a copy of $P_\ell^{(k)}$ in $K^{(k)}(W_1,\dots, W_{m-1},V_m)$, edge by edge, by making sure that for each edge $e$, $|e\cap W_i| = 1$ (for $i=1,\dots,m-1$) and $|e\cap V_m| = m-k+1$ and the consecutive edges of~$P_\ell^{(k)}$ intersect in $V_m$. In the remaining case, when $m=k$, the consecutive edges of~$P_\ell^{(k)}$ intersect either in $W_1$ or $V_k$ by alternating between these two sets.

We now give a description of the algorithm.
As an input there is an $r$-coloring of the edges of the complete $k$-graph $K_n^{(k)}$.
The algorithm consists of $r-1$ implementations of DFS subroutine, each round exploring  the edges of one color only and either finding a monochromatic copy of $P_\ell^{(k)}$ or decreasing the number of colors present on a large subset of vertices, until   after the $(r-1)$st round we end up with a monochromatic complete $r$-partite subgraph, large enough to contain a copy of~$P_\ell^{(k)}$.

During the $i$th round, while trying to build a copy of the path $P_\ell^{(k)}$ in the $i$th color,  the algorithm selects a subset $W_{i,i}$ from a set of still available vertices $V_i\subseteq V$  and, by the end of the round, creates trash bins $S_i$ and $T_i$.
The search for $P_\ell^{(k)}$ is realized by a DFS process which maintains a working path $P$ (in the form of a sequence of vertices) whose endpoints (the first or the last $k-1$ vertices on the sequence) are either extended to a longer path or otherwise put into $W_{i,i}$. The round  is terminated  whenever $P$ becomes a copy of $P_\ell^{(k)}$ or the size of $W_{i,i}$ reaches certain threshold, whatever comes first. In the latter case we set $S_i=V(P)$.

To better depict the extension process, we introduce the following terminology. An edge of $P_\ell^{(k)}$ is called \emph{pendant} if it contains at most one vertex of degree two.
The vertices of degree one, belonging to the pendant edges of $P_\ell^{(k)}$ are called \emph{pendant}. In particular, in $P_1^{(k)}$ all its $k$ vertices are pendant. For convenience, the unique vertex of the path $P_0^{(k)}$ is also considered to be pendant. Observe that for $t\ge0$, to extend a copy $P$ of $P_t^{(k)}$ to a copy of $P_{t+1}^{(k)}$ one needs to add a new edge which shares exactly one vertex with $P$ and that vertex has to be pendant in $P$. Our algorithm may also come across a situation when $P=\emptyset$, that is, $P$ has no vertices at all. Then by an extension of $P$ we mean any edge whatsoever.

The sets $W_{i,i}$ have a double subscript, because they are updated in the later rounds to $W_{i,i+1}$, $W_{i,i+2}$, and so on, until at the end of the $(r-1)$st round (unless a monochromatic $P_\ell^{(k)}$ has been found) one obtains sets $W_i:= W_{i,r-1}$, $i=1,\dots, r-1$, a final trash set $T = \bigcup_{i=1}^{r-1} T_{i} \cup \bigcup_{i=1}^{r-1} S_{i}$ and the remainder set $V_r=V\setminus(\bigcup_{i=1}^{r-1} W_i\cup T)$ such that all $k$-tuples of vertices in $K^{(k)}(W_1,\dots,W_{r-1},V_r)$ are of color $r$.
As an input of the $i$th round  we take  sets $W_{j,i-1}$, $j=1,\dots, i-1$, and $V_{i-1}$, inherited from the previous round, and rename them to
 $W_{j,i}$, $j=1,\dots, i-1$, and $V_{i}$. We also set $T_i=\emptyset$ and $P=\emptyset$,  and update all these sets dynamically until the round ends.

Now come the details.
For $1\le i\le r-1$, let
\begin{equation}\label{tau}
\tau_i =
\begin{cases}
(i-1)\left(  \frac{\ell}{k-r+1} + \frac{\ell+1}{k-r+2} + \dots + \frac{\ell+1}{k-i} \right) & \text{ if } 1\le i \le r-2,\\[4pt]
(r-2)\frac{\ell}{k-r+1} & \text{ if } i=r-1,
\end{cases}
\end{equation}
and
\[
t_i = \tau_i + 2(i-1).
\]

Note that $\tau_i$ is generally not an integer. It can be easily shown (see Claim~\ref{claim:tau}) that for all $2\le r\le k$ and $1\le i\le r-1$
\begin{equation}\label{claim5}
\tau_i \le (i-1)(\ell+1) \left(\frac{1}{k-r+1}+\ln\left(1+\frac{r-2}{k-r+1}\right)\right).
\end{equation}

Before giving a general description of the $i$th round, we deal separately with the 1st and 2nd round.
\bigskip

\noindent
\textbf{Round 1}

\smallskip

\noindent
Set $V_1=V$, $W_{1,1}=\emptyset$, and $P=\emptyset$. Select an arbitrary  edge $e$ of color one (say, \emph{red}), add its vertices to $P$ (in any order), reset $V_1:=V_1\setminus e$, and try to extend $P$ to a red copy of $P_2^{(k)}$. If successful, we appropriately enlarge $P$, diminish $V_1$, and  try to further extend $P$ to a red copy of $P_3^{(k)}$. This procedure is repeated until finally  we either  find a red copy of $P_\ell^{(k)}$ or, otherwise, end up with a red copy $P$ of $P_t^{(k)}$, for some $1\le t\le\ell-1$, which cannot be extended any more. In the latter case we  shorten $P$ by moving all its pendant vertices to  $W_{1,1}$ and try to extend the remaining red path again. When $t\ge2$, the new path has $t-2$ edges. If $t=2$, $P$ becomes a single vertex path $P_0^{(k)}$, while if $t=1$, it becomes empty.

Let us first  consider the simplest but instructive case $r=2$ in which only one round is performed.
We terminate Round~1 as soon as
\begin{equation}\label{step1:W11:lb1}
 |W_{1,1}|\ge\ell .
\end{equation}
If at some point $P=\emptyset$  and cannot be extended (which means there are no red edges within $V_1$), but (\ref{step1:W11:lb1}) fails to hold, then we move  $\ell-|W_{1,1}|$ arbitrary  vertices from $V_1=V\setminus W_{1,1}$ to $W_{1,1}$ and stop.
At that moment, no edge of $K^{(k)}(W_{1,1},V_1)$ is red (so, all of them must be, say, blue). Moreover, since the size of $W_{1,1}$ increases by increments of at most $2(k-1)$, we have
$$\ell\le |W_{1,1}|  \le\ell + 2(k-1)-1,$$
and, consequently,
$$|V_1|=n-|W_{1,1}|-|V(P)|\ge n-\ell - 2(k-1)+1-|V(P_{\ell-1}^{(k)})|\ge \ell(k-2)+1$$
by our bound on $n$ (see Theorem~\ref{con}, case $r=2$). This means that the completely blue copy of $K^{(k)}(W_{1,1},V_1)$ is large enough to contain a copy of $P_\ell^{(k)}$.

When $r\ge3$, there are still more rounds ahead during which the set $W_{1,1}$ will be cut down, so we need to ensure  it is large enough to survive the entire process.
We terminate Round 1 as soon as
\begin{equation}\label{step1:W11:lb}
 |W_{1,1}|\ge(k-1) \tau_2 +\ell +1 .
\end{equation}
If at some point $P=\emptyset$  and cannot be extended and~\eqref{step1:W11:lb} fails to hold, we move
 $\lceil(k-1)\tau_2\rceil+\ell+1-|W_{1,1}|$ arbitrary  vertices from $V_1=V\setminus W_{1,1}$ to $W_{1,1}$ and stop.

Since the size of $W_{1,1}$ increases by increments of at most $2(k-1)$ and the right-hand side of (\ref{step1:W11:lb}) is not necessarily integer, we also have
\begin{equation}\label{step1:W11:ub}
|W_{1,1}| \le (k-1) \tau_2 +\ell +1+ 2(k-1).
\end{equation}
 Finally, we set $S_1:=P$, $T_1 = \emptyset$ for mere convenience, and  $V_1:=V\setminus (W_{1,1}\cup S_1 \cup T_1)$. Note that $|S_1|\le|V(P_{\ell-1}^{(k)})|=(\ell-1)(k-1)+1$. Also, it is important to realize that no edge of $K^{(k)}(W_{1,1}, V_1)$
is colored red.

\bigskip

\noindent
\textbf{Round 2}

\smallskip

\noindent
 We begin with resetting $W_{1,2}:=W_{1,1}$ and $V_2 := V_1$, and setting $P:=\emptyset$, $W_{2,2}=\emptyset$, and  $T_2 := \emptyset$. In this round only the edges of color  two (say, blue) belonging to  $K^{(k)}(W_{1,2}, V_2)$ are considered. Let us denote the set of these  edges by $E_2$.  We choose an arbitrary edge  $e\in E_2$, set $P=e$, and try to extend $P$ to a copy of $P_2^{(k)}$ in $E_2$ but only in such a way that the vertex of $e$ belonging to $W_{1,2}$ remains of degree one on the path.
Then, we  try to extend $P$ to a copy of $P_3^{(k)}$ in $E_2$, etc., always making sure that the vertices in $W_{1,2}$ are of degree one. Eventually, either we find a blue copy of $P_\ell^{(k)}$ or end up with a blue copy $P$ of $P_t^{(k)}$, for some $1\le t\le\ell-1$, which cannot be further extended. We move the  pendant vertices of $P$ belonging to $W_{1,2}$ to the trash set $T_2$, while the  remaining pendant vertices of $P$ go to $W_{2,2}$. Then we try to extend the  shortened path again.
By moving the pendant vertices of $P$ in $W_{1,2}$ to $T_2$ we make sure that in the next iterations there will be no blue edge~$e$ with exactly one vertex in $W_{1,2}$, one vertex in  $W_{2,2}$ and $(k-2)$ vertices in $V_2\setminus W_{2,2}$.
We terminate Round 2 as soon as
\[
 |W_{2,2}| \ge(k-2)\tau_{2}.
\]
If at some point $P=\emptyset$ and cannot be extended and $ |W_{2,2}| < (k-2)\tau_{2}$, then
we move $\lceil(k-2) \tau_{2}\rceil -|W_{2,2}|$ arbitrary vertices from $V_2$ to $W_{2,2}$ and stop. Note that at the end of this round
\begin{equation}\label{step2:W22:ub}
|W_{2,2}| \le (k-2) \tau_{2} + 2(k-2).
\end{equation}
We  set $S_2:=V(P)$ and $V_2:=V\setminus (W_{1,2}\cup W_{2,2} \cup S_1 \cup S_2 \cup T_2)$. Observe that no edge of \linebreak $K^{(k)}(W_{1,2},W_{2,2},V_2)$ is red or blue. We will now show that
\begin{equation}\label{TW}
|T_2| \le t_2\qquad\mbox{ and }\qquad|W_{1,2}| \ge (k-2) \tau_{2}.
\end{equation}

First observe that the size of $W_{1,1}$ (the set obtained in Round 1) satisfies
\begin{equation}\label{step2:1}
|W_{1,1}| \le |W_{1,2}| + |T_{2}| + \ell - 1.
\end{equation}
Indeed, at the end of this round $W_{1,1}$ is the union of $W_{1,2} \cup T_2$ and the vertices in $V(P)\cap W_{1,2}$ that were moved to $S_2$. Since $|V(P)\cap W_{1,2}| \le \ell-1$, \eqref{step2:1} holds.

Also note that each vertex in $T_2$ can be matched with a set of $k-2$ or $k-1$ vertices in $W_{2,2}$, and all these sets are disjoint. Consequently,
\begin{equation}\label{step2:2}
 |W_{2,2}|\ge (k-2) |T_2|.
\end{equation}
Inequality (\ref{step2:2}) immediately implies that
\[
|T_2| \stackrel{\eqref{step2:2}}{\le} \frac{1}{k-2} |W_{2,2}| \stackrel{\eqref{step2:W22:ub}}{\le}  \tau_2 + 2 = t_2.
\]
Furthermore,
\[
(k-1) \tau_2 +\ell +1 \stackrel{\eqref{step1:W11:lb}}{\le}  |W_{1,1}| \stackrel{\eqref{step2:1}}{\le}  |W_{1,2}| + |T_{2}| + \ell - 1 \le |W_{1,2}| + \tau_2 +\ell+1,
\]
completing the proof of (\ref{TW}).

From now on we proceed inductively. Assume that $i\ge 3$ and we have just finished round $i-1$ constructing so far, for each $1\le j\le i-1$, sets $S_j$, $T_j$, and $W_{j,i-1}$, satisfying
\begin{equation}\label{stepi1:Wji1}
 |W_{j,i-1}|\ge \frac{k-i+1}{i-2} \tau_{i-1},
\end{equation}
$|S_{i-1}| \le |V(P_{\ell-1}^{(k)})|$, and $|T_{i-1}| \le t_{i-1}$, and the residual set
$$V_{i-1} = V\setminus \bigcup_{j=1}^{i-1} (W_{j,i-1} \cup S_j \cup T_j )$$ such that  $K^{(k)}(W_{1,i-1},\dots, W_{i-1,i-1},V_{i-1})$ contains no edge of color $1,2,\dots,$ or $i-1$.

\bigskip

\noindent
\textbf{Round $i$, $3\le i\le r-1$}

\smallskip

\noindent
We begin the $i$th round by resetting $W_{1,i}:=W_{1,i-1},\dots,W_{i-1,i}:=W_{i-1,i-1}$, and $V_i:=V_{i-1}$, and setting $P:=\emptyset$, $W_{i,i}:=\emptyset$, and $T_i:=\emptyset$. We consider only edges of color $i$ in $K^{(k)}(W_{1,i},\dots, W_{i-1,i},V_{i})$. Let us denote the set of such edges by $E_i$.

\begin{figure}
\centering
\includegraphics[scale = 0.8]{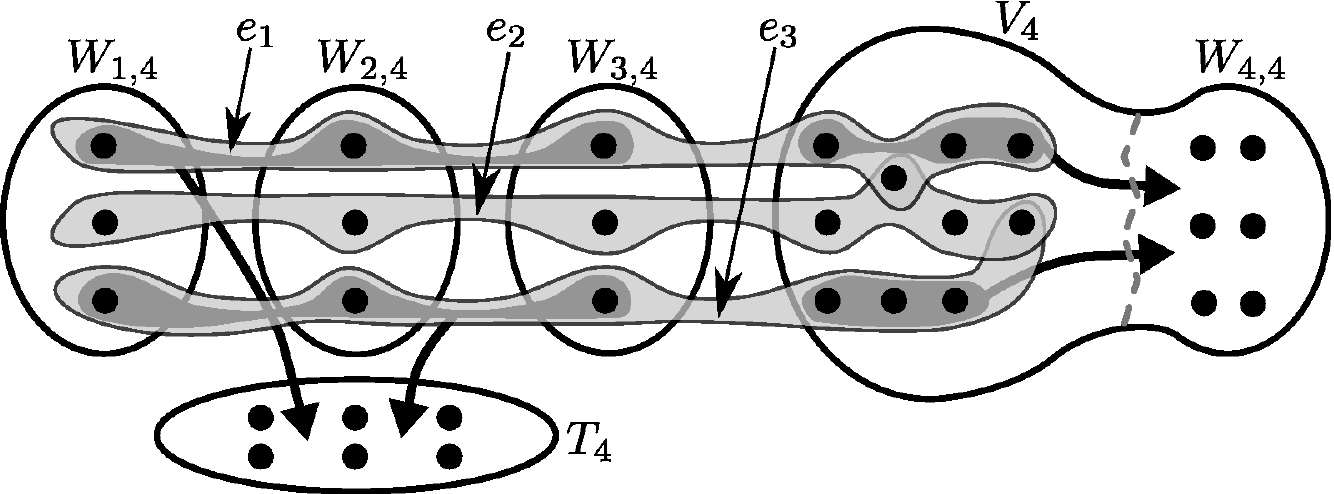}
\caption{Applying the algorithm to  a 7-uniform hypergraph. Here $i=4$ and path $P$, which consists of edges $e_1$, $e_2$, and $e_3$, cannot be extended. Therefore, the vertices in $V(P) \cap (W_{1,4}\cup W_{2,4} \cup W_{3,4})$ are moved to the trash bin $T_4$ and the pendant vertices in $V_4 \cap (e_1\cup e_3)$  are moved to~$W_{4,4}$.}
\label{fig:alg}
\end{figure}

As in the previous steps we are trying to extend the current path $P$ using the edges of $E_i$, but only in such a way that the vertices from $P$ that are in $W_{1,i}\cup\dots\cup W_{i-1,i}$ have degree one in $P$ and the vertices of degree two in $P$ belong to $V_i$. When an extension is no longer possible and $P\neq\emptyset$, we move the  pendant vertices of $P$ belonging to $\bigcup_{j=1}^{i-1}W_{j,i}$ to the trash set $T_i$, while the  remaining pendant vertices of $P$ go to $W_{i,i}$ (see Figure~\ref{fig:alg}). Then we try to extend the shortened path. We terminate the $i$th round as soon as 
\[
 |W_{i,i}|\ge\frac{k-i}{i-1} \tau_{i} .
\]
If $P=\emptyset$ and cannot be extended and $ |W_{i,i}|<\frac{k-i}{i-1} \tau_{i}$, then
we move $\lceil \frac{k-i}{i-1} \tau_{i}\rceil-|W_{i,i}|$  vertices from $V_i$ to $W_{i,i}$ and stop. This yields that
\begin{equation}\label{stepi:Wii:ub}
 |W_{i,i}| \le \frac{k-i}{i-1} \tau_{i}+2(k-i).
\end{equation}
Similarly as in~\eqref{step2:1} and~\eqref{step2:2} notice that for all $1\le j\le i-1$
\begin{equation}\label{stepi:1}
|W_{j,i-1}| \le |W_{j,i}| + \frac{|T_{i}|}{i-1} + \ell - 1
\end{equation}
and
\begin{equation}\label{stepi:2}
|T_i| \le \frac{i-1}{k-i} |W_{i,i}| \le \tau_i + 2(i-1) = t_i.
\end{equation}
Thus,
\[
\frac{k-i+1}{i-2} \tau_{i-1} \stackrel{\eqref{stepi1:Wji1}}{\le}
|W_{j,i-1}|
\stackrel{\eqref{stepi:1},\eqref{stepi:2}}{\le}
|W_{j,i}| + \frac{\tau_i}{i-1} + 2  + \ell-1
= |W_{j,i}| + \frac{\tau_i}{i-1} + \ell+1
\]
and, since also
\[
\frac{k-i+1}{i-2} \tau_{i-1} \stackrel{\eqref{tau}}{=} \frac{k-i+1}{i-1} \tau_i + \ell+1,
\]
we get
\begin{equation}\label{Wji}
 |W_{j,i}|\ge \frac{k-i}{i-1} \tau_i.
\end{equation}
Finally we set $S_i:=V(P)$. Consequently, when the $i$th round ends, we have (\ref{Wji})
for all $1\le j\le i$. We also have $|S_{i}| \le |V(P_{\ell-1}^{(k)})|$, $|T_{i}| \le t_{i}$, and $V_{i} = V \setminus \bigcup_{j=1}^{i} (W_{j,i} \cup S_j \cup T_j )$  such that \linebreak $K^{(k)}(W_{1,i},\dots, W_{i-1,i},W_{i,i},V_{i})$ has no edges of color $1,2,\dots,$ or $i$.

In particular, when the $(r-1)$st round is finished, we have, for each $1\le j\le r-1$,
\begin{equation}\label{stepr1:Wjr1}
|W_{j,r-1}| \ge \frac{k-r+1}{r-2} \tau_{r-1},
\end{equation}
$|S_{r-1}| \le |V(P_{\ell-1}^{(k)})|$ and $|T_{r-1}| \le t_{r-1}$. Set $W_j := W_{j,r-1}$, $j=1,\dots,r-1$,  and $V_r := V\setminus \bigcup_{j=1}^{r-1} (W_{j} \cup S_j \cup T_j )$ and observe that $K^{(k)}(W_{1},\dots, W_{r-1},V_{r})$
has only edges of color~$r$.

By (\ref{stepr1:Wjr1}), for each $1\le j\le r-1$
\[
|W_{j}| \stackrel{\eqref{stepr1:Wjr1}}{\ge} \frac{k-r+1}{r-2} \tau_{r-1} \stackrel{\eqref{tau}}{=} \ell.
\]

Now we are going to show that $|V_r| \ge \ell(k-r+1)$ which will complete the proof as this bound yields a monochromatic copy of $P_\ell^{(k)}$ inside $K^{(k)}(W_{1},\dots, W_{r-1},V_{r})$. (Actually for $r\le k-1$ it suffices to show that $|V_r| \ge \ell(k-r)+1$.)

First observe that
\begin{equation}\label{eq:WandT}
|W_{1,1}| + \dots + |W_{r-2,r-2}| \ge |W_{1}|+\dots+|W_{r-2}| + |T_1| + \dots + |T_{r-1}|.
\end{equation}
This is easy to see, since during the process
\[W_{i,i} \supseteq W_{i,r-1} \cup \left(W_{i,i}\cap (T_{i+1}\cup\dots\cup T_{r-1})\right).
\]
Also,
\begin{align*}
|W_{1,1}| &\stackrel{\eqref{step1:W11:ub}}{\le} (k-1)\tau_2+ 2(k-1) + \ell+1\\
&\stackrel{(\ref{claim5})}{\le} (k-1)(\ell+1)\left(\frac{1}{k-r+1}+\ln\left(1+\frac{r-2}{k-r+1}\right)\right) + 2(k-1) + \ell+1
\end{align*}
and, for $2\le i\le r-1$,
\begin{align*}
|W_{i,i}| &\stackrel{\eqref{stepi:Wii:ub}}{\le} \frac{k-i}{i-1} \tau_{i}+2(k-i)\\
&\stackrel{(\ref{claim5})}{\le} (k-i)(\ell+1)\left(\frac{1}{k-r+1}+\ln\left(1+\frac{r-2}{k-r+1}\right)\right) + 2(k-i).
\end{align*}
Since
\[
\sum_{i=1}^{r-1} (k-i) = (k-r/2)(r-1),
\]
we have by~\eqref{eq:WandT} that
\begin{align*}
|W_{1}|+\dots&+|W_{r-1}| + |T_2| + \dots + |T_{r-1}|\\
&\le (\ell+1)\left(\frac{1}{k-r+1}+\ln\left(1+\frac{r-2}{k-r+1}\right)\right)(k-r/2)(r-1)\\
&\hspace{6cm}+ (2k-r)(r-1) + \ell+1\\
&\le k(\ell+1)r\left(\frac{1}{k-r+1}+\ln\left(1+\frac{r-2}{k-r+1}\right)\right)\\
&\hspace{6cm}+ (2k-r)(r-1) + \ell+1.
\end{align*}
As also $|S_i| \le |V(P_{\ell-1}^{(k)})| =(k-1)(\ell-1)+1$ for each $1\le i \le r-1$
and
$$|V_r|=|V|-\sum_{i=1}^{r-1}(|W_i|+|T_i|+|S_i|),$$
 we finally obtain, using  the lower bound on $n=|V|$, that

\begin{align*}
|V_r|
&\ge  k(\ell+1)r - (2k-r)(r-1) - \ell - 1-  (r-1)\left[(k-1)(\ell-1)+1\right] \\
&= \ell(2r-3) + (r-1)(r-2) + (k-1) + \ell(k-r+1) \ge \ell(k-r+1),
\end{align*}
since the first three terms in the last line are nonnegative.

\bigskip

To check the $O(n^k)$ complexity time, observe that in the worst-case scenario we need to go over all edges colored by the first $r-1$ colors and no edge is visited more than once.

\section{Auxiliary inequalities}

For the sake of completeness we prove here two straightforward inequalities.
\begin{claim}\label{claim:tau}
Let $2\le r\le k$, $1\le i\le r-1$ and
\begin{equation*}
\tau_i =
\begin{cases}
(i-1)\left(  \frac{\ell}{k-r+1} + \frac{\ell+1}{k-r+2} + \dots + \frac{\ell+1}{k-i} \right) & \text{ if } 1\le i \le r-2,\\[4pt]
(r-2)\frac{\ell}{k-r+1} & \text{ if } i=r-1.
\end{cases}
\end{equation*}
Then,
\[
\tau_i \le (i-1)(\ell+1) \left(\frac{1}{k-r+1}+\ln\left(1+\frac{r-2}{k-r+1}\right)\right).
\]
\end{claim}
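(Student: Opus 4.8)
The plan is to reduce the claimed bound to a comparison between a partial harmonic-type sum and a logarithm, handling the two cases in the definition of $\tau_i$ separately. I observe first that for $i=1$ both sides vanish (the factor $(i-1)$ annihilates everything), so I may assume $i\ge 2$ throughout.

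For the main range $2\le i\le r-2$, I would begin by replacing the anomalous numerator $\ell$ of the leading term by $\ell+1$, which only increases the quantity, and factoring $(\ell+1)$ out of the whole bracket:
$$\frac{\ell}{k-r+1}+\frac{\ell+1}{k-r+2}+\dots+\frac{\ell+1}{k-i}\le(\ell+1)\sum_{m=k-r+1}^{k-i}\frac1m.$$
The task then becomes to bound the partial sum $\sum_{m=k-r+1}^{k-i}\frac1m$ by $\frac{1}{k-r+1}+\ln\left(1+\frac{r-2}{k-r+1}\right)$. For this I would peel off the first term $\frac{1}{k-r+1}$ and estimate the remaining terms by the standard integral comparison $\frac1m\le\int_{m-1}^m\frac{dx}{x}$ (valid since $1/x$ is decreasing), whose intervals telescope:
$$\sum_{m=k-r+2}^{k-i}\frac1m\le\int_{k-r+1}^{k-i}\frac{dx}{x}=\ln\frac{k-i}{k-r+1}.$$

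The final step is purely algebraic: rewriting $\frac{k-i}{k-r+1}=1+\frac{r-i-1}{k-r+1}$ and using $i\ge1$ (so that $r-i-1\le r-2$) gives $\ln\frac{k-i}{k-r+1}\le\ln\left(1+\frac{r-2}{k-r+1}\right)$, which assembles into the claim after multiplying through by $(i-1)(\ell+1)$. For the boundary case $i=r-1$, where $\tau_{r-1}=(r-2)\frac{\ell}{k-r+1}$, no sum is needed: since $\ell\le\ell+1$ and the logarithm is nonnegative (because $r\ge2$ and $r\le k$ force $1+\frac{r-2}{k-r+1}\ge1$), I bound $(r-2)\frac{\ell}{k-r+1}\le(r-2)(\ell+1)\frac{1}{k-r+1}$, which does not exceed the right-hand side. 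I do not anticipate a genuine obstacle; the only points requiring mild care are aligning the summation index with the integral limits so that $\frac1m\le\int_{m-1}^m dx/x$ is applied to exactly the terms $m=k-r+2,\dots,k-i$, and verifying that the hypotheses $2\le r\le k$ keep all denominators $k-r+1,\dots,k-i$ positive, so that the integral estimate is legitimate.
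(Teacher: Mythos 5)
Your proof is correct and follows essentially the same route as the paper: factor out $\ell+1$, bound the resulting harmonic sum by its first term plus $\int_{k-r+1}^{k-i}\frac{dx}{x}$, and then use $k-i\le k-1$ to reach $\ln\left(1+\frac{r-2}{k-r+1}\right)$. The only difference is that you spell out the $i=1$ and $i=r-1$ boundary cases and the $\ell\le\ell+1$ replacement explicitly, which the paper leaves implicit in its ``it suffices to observe'' reduction.
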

\begin{proof}
It suffices to observe that
\begin{align*}
\frac{1}{k-r+1} + \frac{1}{k-r+2}  + \dots + \frac{1}{k-i}
&\le \frac{1}{k-r+1}+ \int_{k-r+1}^{k-i} \frac{dx}{x} \\
&= \frac{1}{k-r+1} + \ln\left( \frac{k-i}{k-r+1}\right) \\
&\le \frac{1}{k-r+1} +\ln\left( \frac{k-1}{k-r+1}\right)\\
&= \frac{1}{k-r+1} + \ln\left( 1 + \frac{r-2}{k-r+1}\right).
\end{align*}
\end{proof}

\begin{claim}\label{claim:cor}
For all $2\le r \le k-1$ we have
\begin{equation}\label{claim:ineq}
\frac{1}{k-r+1}+\ln\left(1+\frac{r-2}{k-r+1}\right) \le \ln\left(1+\frac{r-1}{k-r}\right).
\end{equation}
\end{claim}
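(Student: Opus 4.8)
The plan is to reduce the claimed inequality to a single standard logarithmic estimate by first clearing the ``$1+\cdots$'' terms inside both logarithms. Writing each argument over a common denominator, I would observe that
\[
1+\frac{r-2}{k-r+1}=\frac{k-1}{k-r+1}\qquad\text{and}\qquad 1+\frac{r-1}{k-r}=\frac{k-1}{k-r},
\]
so that the target inequality~\eqref{claim:ineq} becomes
\[
\frac{1}{k-r+1}+\ln\frac{k-1}{k-r+1}\le\ln\frac{k-1}{k-r}.
\]
Subtracting the common term $\ln(k-1)$ from both sides collapses this to
\[
\frac{1}{k-r+1}\le\ln\frac{k-r+1}{k-r}.
\]

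Next I would substitute $m=k-r$; since the hypothesis $2\le r\le k-1$ forces $m\ge1$, the inequality to prove is simply
\[
\frac{1}{m+1}\le\ln\left(1+\frac1m\right)\qquad(m\ge1).
\]
This is a well-known fact that I would justify in one line via the integral representation of the logarithm: since $\tfrac1x\ge\tfrac1{m+1}$ for every $x\in[m,m+1]$, we get
\[
\ln\left(1+\frac1m\right)=\int_m^{m+1}\frac{dx}{x}\ge\int_m^{m+1}\frac{dx}{m+1}=\frac{1}{m+1}.
\]
Equivalently, one may invoke the elementary bound $\ln(1+x)\ge x/(1+x)$ with $x=1/m$, which gives $\ln(1+1/m)\ge (1/m)/(1+1/m)=1/(m+1)$ at once.

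There is essentially no serious obstacle here: once the two logarithms are rewritten with the common numerator $k-1$ and that factor is cancelled, everything reduces to the classical estimate above, which mirrors the integral comparison already used in the proof of Claim~\ref{claim:tau}. The only point requiring a moment's care is checking that the hypothesis $r\le k-1$ (rather than merely $r\le k$) is precisely what guarantees $m=k-r\ge1$, so that both sides of the reduced inequality are well defined and the pointwise comparison $\tfrac1x\ge\tfrac1{m+1}$ holds throughout the interval $[m,m+1]$.
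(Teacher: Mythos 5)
Your proof is correct and follows essentially the same route as the paper: both reduce the claim, after cancelling $\ln(k-1)$, to the elementary inequality $\tfrac{1}{k-r+1}\le\ln\bigl(1+\tfrac{1}{k-r}\bigr)$ with $k-r\ge1$. The only (immaterial) difference is that you justify this last step by an integral comparison, while the paper shows the function $f(x)=\ln\bigl(1+\tfrac1x\bigr)-\tfrac{1}{x+1}$ is decreasing with limit $0$.
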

\begin{proof}
Let $f(x) = \ln\left( 1+\frac{1}{x}\right) - \frac{1}{x+1}$
and observe that $f'(x) = \frac{-1}{x(x+1)^2}$. Hence, $f(x)$ is decreasing for $x>0$ and so $f(x) \ge \lim_{x\to\infty} f(x) = 0$.
Consequently, for $x = k-r$ (by assumption $k-r\ge 1$) we get that
\[
\frac{1}{k-r+1} \le  \ln\left( 1+\frac{1}{k-r}\right)
= \ln\left(\frac{k-r+1}{k-r}\right)
= \ln\left(\frac{k-1}{k-r}\right) - \ln\left(\frac{k-1}{k-r+1}\right),
\]
which is equivalent to~\eqref{claim:ineq}.
\end{proof}

%\bibliographystyle{abbrvnat}
%\bibliography{refs}

\end{document}